\documentclass[a4paper,12pt,twoside,notitlepage,reqno]{amsart}

\usepackage{amsmath,amssymb,amsbsy,amsthm,amsfonts}
\usepackage[hmargin=1.4in,vmargin=1.4in]{geometry}
\usepackage{url}

\usepackage{tikz}
\usepackage{polynom}
\usetikzlibrary{matrix}
\usepackage[linktocpage]{hyperref}
\hypersetup{
    colorlinks=true,        
    linkcolor=red,          
    citecolor=red,         
    filecolor=magenta,      
    urlcolor=cyan           
}


\addtocontents{toc}{\protect\setcounter{tocdepth}{1}}

\usepackage{eucal}




\theoremstyle{plain}
	\newtheorem{theorem}{Theorem}
	
		\numberwithin{theorem}{section}
	\newtheorem{lemma}[theorem]{Lemma}
	\newtheorem{prop}[theorem]{Proposition}
	\newtheorem{corollary}[theorem]{Corollary}

	\newtheorem*{theorem*}{Theorem}
	\newtheorem*{lemma*}{Lemma}
	\newtheorem*{prop*}{Proposition}
	\newtheorem*{cor*}{Corollary}
	\newtheorem*{conj*}{Conjecture}

\theoremstyle{definition}
	
	\newtheorem*{example*}{Example}



\begin{document}

\title[]
{Filtered deformations of commutative algebras of Krull dimension two}

\author{Jason P. Bell}
\address{University of Waterloo \\
Department of Pure Mathematics \\
Waterloo, Ontario \\
Canada  N2L 3G1}
\thanks{The author was supported in part by NSERC grant RGPIN-2022-02951.}
\email{jpbell@uwaterloo.ca}
\keywords{filtered deformations, positive characteristic, polynomial identities.}
\subjclass{16S80, 16S38, 13A335}
\begin{abstract}
    Let $F$ be an algebraically closed field of positive characteristic and let $R$ be a finitely generated $F$-algebra with a filtration with the property that the associated graded ring of $R$ is an integral domain of Krull dimension two.  We show that under these conditions $R$ satisfies a polynomial identity, answering a question of Etingof in the affirmative in a special case.
\end{abstract}
\maketitle

\section{Introduction}
The field of quantum algebras typically involves the study of algebras that are constructed by deforming the multiplicative structure of a ``classical'' object (often the coordinate ring of an affine variety, an enveloping algebra, or a group algebra). An important class of such deformations comes from algebras that are filtered and whose associated graded algebra with respect to this filtration is commutative.  Given a field $F$ and a graded $F$-algebra $B$, a $\mathbb{Z}_+$-\emph{filtered deformation} (henceforth we shall drop the $\mathbb{Z}_+$) of $B$ is simply an algebra $R$ with a filtration 
\begin{equation}\label{eq:filt}
V_0\subseteq V_1 \subseteq V_2 \subseteq \cdots \subseteq \bigcup_i V_i=R
\end{equation} such that $B$ is the associated graded algebra of $R$.  An important class of filtered deformations comes from enveloping algebras and their homomorphic images, which are filtered deformations of commutative algebras.  Indeed, if the graded algebra $B$ above is commutative and generated in degree one then $R$ is necessarily a homomorphic image of an enveloping algebra of a Lie algebra.  

In positive characteristic, enveloping algebras are known to satisfy a polynomial identity.  Intuitively, this says that filtered deformations of commutative algebras that are generated in degree one behave much like algebras that are finite modules over their centres.  Etingof \cite[Question 1.1]{Etingof} asked whether this phenomenon holds more generally for all filtered deformations of graded commutative algebras over positive characteristic algebraically closed fields and he proved that when a filtered deformation of a commutative algebra in positive characteristic satisfies a polynomial identity then the degree of the identity is a power of the characteristic of the field.

Understanding when filtered deformations in positive characteristic satisfy a polynomial identity is often useful even in the context of looking at filtered deformations of algebras in characteristic zero.  The reason for this is that one is typically dealing with finitely presented algebras, which allows one to take a model for the algebra over a base ring that is finitely generated as a $\mathbb{Z}$-algebra.  One can then reduce mod maximal ideals of the base ring to obtain filtered deformations of algebras in positive characteristic. In some circumstances this approach can be used to lift mod $p$ information to prove non-trivial results in characteristic zero (see for example \cite{BD}), and this philosophy inspired in part a related question of Cuadra, Etingof, and Walton \cite{CEW}.   

In general, Etingof's question has proved difficult to answer, although it has been answered in the affirmative in the case of iterated Hopf Ore extensions in positive characteristic \cite{BZ}.  We are able to prove Etingof's question has an affirmative answer in the case when the associated graded algebra is finitely generated and has Krull dimension at most two.  In fact, the case of Krull dimension one follows from a result of Small and Warfield \cite{SW, SSW} and so the interesting case of this result is the Krull dimension two case.
\begin{theorem}
\label{thm:main} Let $F$ be an algebraically closed field of positive characteristic and let $R$ be a finitely generated filtered $F$-algebra whose associated graded ring is a commutative $F$-algebra of Krull dimension at most two.  Then $R$ satisfies a polynomial identity.
\end{theorem}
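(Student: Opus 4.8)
The plan is to isolate the two mechanisms that force such an $R$ to satisfy an identity: the reduction to low dimension supplied by Small--Warfield, and the appearance of a large central subalgebra forced by the Frobenius in characteristic $p$. Write $B=\operatorname{gr}(R)$ and $p=\operatorname{char}(F)$. First I would record that, $B$ being an affine commutative ring, it is Noetherian, and that standard filtered--graded lifting makes $R$ Noetherian with $\operatorname{GKdim}(R)=\operatorname{Kdim}(B)\le 2$. If $\operatorname{Kdim}(B)\le 1$, then $\operatorname{GKdim}(R)\le 1$ and $R$ satisfies a polynomial identity by Small--Warfield \cite{SW,SSW}, so the content lies in the case $\operatorname{Kdim}(B)=2$. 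Using that a ring possessing a nilpotent ideal with PI quotient is itself PI (to strip off the nilradical of $B$), together with a short argument to pass to a single component, I would reduce to the case in which $B$ is a domain of Krull dimension exactly two.

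Next I would exploit the semiclassical structure. Since the commutator on $R$ lowers filtration degree by one, the rule $\{\bar a,\bar b\}:=\overline{[a,b]}$ is a well defined Poisson bracket on $B$, homogeneous of degree $-1$ for the grading. The characteristic-$p$ miracle is that $\{f^{p},g\}=p\,f^{p-1}\{f,g\}=0$, so the Frobenius image $B^{p}$ lies in the Poisson centre of $B$; since $F$ is perfect and $B$ is affine, $B$ is a finite module over $B^{p}$, hence a finite module over its Poisson centre, which therefore has Krull dimension two. Thus at the level of symbols there is already an abundance of Poisson-central elements over which $B$ is finite, and the task becomes to promote these to honest central elements of $R$.

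The heart of the argument, and the step I expect to be the main obstacle, is exactly this promotion. In the enveloping-algebra (degree-one-generated) case one uses the restricted $p$-power map: for a generator $a$ one has $\operatorname{ad}(a)^{p}=\operatorname{ad}(a^{p})$, and the finite-dimensional $p$-closed Lie structure furnishes an $a^{[p]}$ with $a^{p}-a^{[p]}$ central. No such restricted power is available for a general filtered deformation, and in characteristic zero the first Weyl algebra shows the statement is outright false; so the argument must genuinely use both $p>0$ and $\operatorname{Kdim}(B)=2$. Concretely, from $\operatorname{ad}(a^{p})=\operatorname{ad}(a)^{p}$ one sees that $\operatorname{ad}(a^{p})$ raises filtration degree by at most $p(\deg a-1)$ rather than the expected $p\deg a-1$, so $p$-th powers are far closer to central than their degree would predict. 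I would leverage the rigidity of Poisson brackets on a two-dimensional domain --- where the Hamiltonian derivations $\{f,-\}$ span a module of rank at most two and the symplectic leaves have dimension at most two --- to correct each $p$-power by lower-order terms, producing a central subalgebra $Z\subseteq R$ over which $R$ is a finite module; the Noetherian hypothesis is what I would use to guarantee that the correction process terminates.

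Granting such a $Z$, the proof concludes formally: a ring that is a finite module over a central subalgebra satisfies a polynomial identity of degree bounded in terms of the rank, which is the desired conclusion. The Krull dimension two hypothesis enters precisely at the promotion step, as it is what constrains the Poisson geometry tightly enough to convert the leading-order centrality of Frobenius images into genuine finiteness of $R$ over its centre.
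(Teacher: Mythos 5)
Your reduction to the case where $B$ is a domain of Krull dimension exactly two, and your identification of the key characteristic-$p$ mechanism --- namely that $\operatorname{ad}(a^{p})=\operatorname{ad}(a)^{p}$ together with the fact that $\operatorname{ad}(a)$ lowers filtration degree makes $p$-th powers ``far closer to central than their degree would predict'' --- both match the paper. But the step you yourself flag as the heart of the argument is a genuine gap, not a proof. You assert that one can ``correct each $p$-power by lower-order terms, producing a central subalgebra $Z\subseteq R$ over which $R$ is a finite module,'' invoking ``the rigidity of Poisson brackets on a two-dimensional domain'' and Noetherianity to make the correction terminate. No mechanism is given for producing even a single correction term, for why the obstruction at each stage vanishes, or for why the process stabilizes; and the target statement is stronger than what is actually needed or what the paper proves. (The paper never shows $R$ is a finite module over its centre; it only concludes that $\operatorname{Frac}(R)$ is finite-dimensional over its centre, via an external theorem.) The introduction of the paper explicitly identifies the failure of adjoint operators to preserve generating subspaces, when $R$ is not generated in degree one, as the obstacle to exactly the kind of restricted-power correction you propose; your sketch does not overcome it.

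The paper's actual route around this obstacle is quite different and worth contrasting. Rather than trying to center the elements $a^{p^{i}}$, it fixes \emph{one} element $x$ (chosen by Proposition \ref{prop:reduced} so that $B/\overline{x}B$ is reduced), sets $K=F(x)$, and makes a dimension count over $K$: since $\dim_K(KV_n/KV_{n-1})=O(1)$ (Lemma \ref{lem:O1}, which uses Krull dimension two), while for carefully chosen exponents the elements $a_i^{p^{i}}$ land in $\{a: [a,x]\in KV_M\}$ and are $K$-independent of $KV_{M-d}$ (Proposition \ref{prop:infinity}), the rank--nullity theorem forces the centralizer $E$ of $x$ in $\operatorname{Frac}(R)$ to be infinite-dimensional over $K$ (Corollary \ref{cor:E}). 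A maximal-subfield argument using V\'amos's theorem then produces an element of $E$ transcendental over $K$, giving a subfield of transcendence degree two in $\operatorname{Frac}(R)$, and the conclusion follows from the classification of division algebras of Gelfand--Kirillov transcendence degree two \cite{BellDivision}. If you want to salvage your outline, the realistic fix is to replace the ``promotion to a central subalgebra'' step with an argument of this centralizer-counting type (or an appeal to a comparable external result); as written, the proposal assumes the hardest part of the theorem.
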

One of the extra layers of difficulty when considering the case of algebras that are not generated in degree one is that for elements of $R$ that are not of degree one (that is, not in the subspace $V_1$ of the filtration), the adjoint operators ${\rm ad}_x=[x,-]: R\to R$ do not tend to preserve generating subspaces for the algebra $R$, which is a useful property to exploit when proving that an algebra satisfies a polynomial identity.  

We are able to get around this added layer of difficulty in the Krull dimension two case as follows.  We note that for the filtration in (\ref{eq:filt}), the $F$-dimension of $V_n$ is growing quadratically with $n$ and one can use this to show that if $x\in R\setminus F$ then the dimension of $F(x)V_n$ as a left $K$-vector space is growing linearly with $n$ and we in fact have $F(x)V_n/F(x)V_{n-1}$ has uniformly bounded dimension (see Lemma \ref{lem:O1}).

We use this observation to show that for suitably chosen $x\in R$ and $n\in \mathbb{N}$ the space of elements mapped into $F(x)V_n$ under the adjoint operator ${\rm ad}_x$ has large dimension as an $F(x)$-vector space and from this we deduce that the centralizer of $x$ in the quotient division algebra of $R$ is infinite-dimensional over $K$.  From here we argue that the centralizer contains a subfield of transcendence degree at least two over $F$ and use this fact to show that $R$ is PI.


\section*{Proof of Theorem \ref{thm:main}} 
In this section we prove our main result. We begin with a basic result in commutative algebra, which is surely known, although we're unaware of a reference that does everything claimed.

\begin{prop} Let $F$ be an algebraically closed field of characteristic $p$ and let $B$ be a finitely generated commutative connected $\mathbb{N}$-graded $F$-algebra that is an integral domain of Krull dimension $d$ whose degree $m$ piece is nonzero for all sufficiently large $m$. Then there exist $x_1,\ldots ,x_d$ in $B$ such that the following hold:
\begin{enumerate} 
\item[(a)] $x_1,\ldots, x_d$ are homogeneous of degree $n$ with $\gcd(p,n)=1$;
\item[(b)] $\{x_1,\ldots ,x_d\}$ is a separating transcendence base for ${\rm Frac}(B)$ as an extension of $F$;
\item[(c)] $B/x_1B$ is reduced.

\end{enumerate}
\label{prop:reduced}
\end{prop}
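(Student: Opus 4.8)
The plan is to establish (a) and (b) simultaneously by a Veronese argument, and then to derive (c) from Serre's criterion together with a generic-choice (Bertini-type) argument. For (a) and (b), I would fix an integer $n$ with $\gcd(p,n)=1$ large enough that $B_n\neq 0$ (possible by hypothesis) and that the Veronese subalgebra $B^{(n)}=\bigoplus_k B_{nk}$ is generated in degree one, i.e. $B^{(n)}=F[B_n]$; recall that every sufficiently large Veronese power of a finitely generated graded algebra is standard. Since every homogeneous $c\in B_m$ satisfies $c^n\in B_{mn}\subseteq B^{(n)}$, the field $\operatorname{Frac}(B)$ is algebraic over $\operatorname{Frac}(B^{(n)})$, and because $\gcd(p,n)=1$ the polynomial $T^n-c^n$ is separable, so this algebraic extension is in fact separable. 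As $F$ is perfect, $\operatorname{Frac}(B^{(n)})=F(B_n)$ is separably generated over $F$, so by the classical fact that a separating transcendence base can be extracted from any generating set of a separably generated extension I would choose $x_1,\dots,x_d\in B_n$ forming such a base for $\operatorname{Frac}(B^{(n)})/F$. Transitivity of separability then upgrades $\{x_1,\dots,x_d\}$ to a separating transcendence base for $\operatorname{Frac}(B)/F$, yielding (a) and (b) at once, and leaving genuine freedom in the choice of $x_1$ within $B_n$.

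For (c) I would invoke Serre's criterion: the quotient $B/x_1B$, which has dimension $d-1$, is reduced precisely when it satisfies $(R_0)$ (generic reducedness) and $(S_1)$ (no embedded primes). Because $B$ is a domain, $x_1$ is a nonzerodivisor and, by Krull's principal ideal theorem, every minimal prime over $(x_1)$ has height one; the only prime that could occur as an embedded prime of $B/x_1B$ is the irrelevant maximal ideal $\mathfrak m=B_+$. To secure $(R_0)$ I would spend the freedom in the choice of $x_1$: there are only finitely many height-one primes of $B$ lying in the (homogeneous) singular locus, and for $n$ large none of them contains all of $B_n$, so a sufficiently general $x_1\in B_n$ avoids them. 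Then every generic point of $V(x_1)$ lies in the smooth, hence regular, locus of $\operatorname{Spec} B$ (smooth because $F$ is perfect), where a general member of the linear system vanishes to order one; separability of the system, guaranteed by the separating base from (b) and by $\gcd(p,n)=1$, is exactly what rules out the characteristic-$p$ failure of Bertini's reducedness theorem.

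The hard part will be the $(S_1)$ condition, namely excluding $\mathfrak m$ as an embedded prime of $B/x_1B$. Since $\operatorname{depth}(B/x_1B)=\operatorname{depth}_{\mathfrak m}B-1$ for the nonzerodivisor $x_1$, this amounts to $\operatorname{depth}_{\mathfrak m}B\ge 2$, which is not automatic for a graded domain. I would control it through the deficiency modules $H^i_{\mathfrak m}(B)$ with $i<d$: each of these is concentrated in finitely many degrees (their graded pieces compute $H^{i-1}(\operatorname{Proj}B,\mathcal O(j))$, which vanishes for $|j|\gg 0$ by Serre vanishing and Serre duality), so for $n$ large one has $H^i_{\mathfrak m}(B^{(n)})_k=H^i_{\mathfrak m}(B)_{nk}=0$ for all $i<d$ and all $k$. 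Thus after enlarging $n$ the Veronese $B^{(n)}$ becomes Cohen--Macaulay, giving $\operatorname{depth}_{\mathfrak m}B^{(n)}=d\ge 2$ and hence $(S_1)$ for $B^{(n)}/x_1B^{(n)}$.

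The delicate step, and the one I expect to absorb most of the work, is reconciling this depth input with the precise statement for $B$ itself while keeping the generic-reducedness step unconditional in characteristic $p$. Concretely, I would first verify that the separating base produced in (a)--(b) already lives in the Cohen--Macaulay Veronese power $B^{(n)}$, then run the Serre-criterion argument there, and finally transfer the conclusion back, using that $B$ and $B^{(n)}$ share the same smooth locus away from $\mathfrak m$ and the same generic points of $V(x_1)$. Managing the behaviour at the vertex $\mathfrak m$, where the non-Cohen--Macaulay locus of $B$ is concentrated, is where the argument is most sensitive and is the principal obstacle to carrying (c) through cleanly.
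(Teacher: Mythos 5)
Your argument for (a) and (b) follows the same basic line as the paper's: pass to homogeneous elements of a single degree $n$ prime to $p$ and exploit that adjoining an $n$-th root is then separable (the paper does this inside the graded quotient field $\operatorname{Frac}_{\rm gr}(B)=K[t^{\pm1}]$ and applies Noether normalization over the algebraically closed field $F$ to land a separating transcendence base inside $B_n$). However, the lemma you ``recall'' is false as stated: Veronese subalgebras $B^{(n)}$ become standard only for $n$ ranging over large multiples of some fixed $e$, and that $e$ may be divisible by $p$. For $B=F[u,v]$ with $\deg u=1$, $\deg v=2$ one has $B_m\neq 0$ for every $m$, yet $v^3\in B_6\setminus B_3\cdot B_3$, so $B^{(3)}$ is not generated in degree one. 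What you actually need is only that $\operatorname{Frac}(B)$ is separable algebraic over $F(B_n)$, and this does hold for all large $n$ prime to $p$, since $F(B_n)\supseteq K(t^n)$ once $n$ exceeds the degrees of a generating set of $K$; so this part is repairable.

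For (c) your route (Serre's criterion plus a Bertini-type argument) is genuinely different from the paper's explicit computation with the structure constants of $B_f$ as a free $F[x_1,\ldots,x_d]_f$-module, but the obstruction you isolate as ``the hard part'' is not merely hard --- it is fatal to the statement. As you observe, $(S_1)$ for $B/x_1B$ forces $\operatorname{depth}_{\mathfrak m}B\ge 2$, and this can fail: for $B=F[s^4,s^3t,st^3,t^4]$, graded so that the four generators have degree one, $B$ is a connected graded domain of dimension two with $B_m\neq0$ for all $m$ and $\operatorname{depth}_{\mathfrak m}B=1$ (the fraction $s^2t^2$ lies outside $B$ but multiplies $\mathfrak m$ into $B$). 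Hence $\mathfrak m$ is an embedded prime of $B/x_1B$ for every homogeneous nonzerodivisor $x_1$ of positive degree, so $B/x_1B$ is never reduced; concretely, for $x_1=t^4$ the class of $(s^3t)^2$ is a nonzero square-zero element of $B/x_1B$. No choice of $n$ and no genericity can rescue (c) in the stated generality, and your proposed detour through a Cohen--Macaulay Veronese cannot close the gap you flag at the end: the offending nilpotents are $\mathfrak m$-torsion and can vanish in $B^{(n)}$, so reducedness of $B^{(n)}/x_1B^{(n)}$ does not descend to $B/x_1B$. (The paper's own argument shares this blind spot: it proves that $x_1B_f$ is a radical ideal of the localization $B_f$, and the final passage back to $B$ tacitly requires $f$ to avoid the associated primes of $B/x_1B$, which is exactly what fails when $\mathfrak m$ is embedded.) The statement that is both true and sufficient for the rest of the paper is the weaker one that the nilradical of $B/x_1B$ is $\mathfrak m$-torsion, equivalently that homogeneous elements of sufficiently large degree outside $x_1B$ have $p$-power powers outside $x_1B$; your $(R_0)$ step aims at precisely this, but as written it only gestures at the characteristic-$p$ Bertini assertion that a general member of the system vanishes to order one at the generic points of its zero locus, which is the content that the paper's structure-constant computation is designed to supply and which still needs a proof in your approach.
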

\begin{proof}
If we invert the nonzero homogeneous elements of $B$ we obtain a graded Laurent polynomial algebra ${\rm Frac}_{\rm gr}(B)=K[t^{\pm 1}]$ where $K$ is a finitely generated extension of $F$ of transcendence degree $d-1$ formed by taking the degree zero piece of the localization and $t$ is homogeneous of degree one.  Then $K$ has a separating transcendence base $\{a_1/b,\ldots ,a_{d-1}/b\}$ with $a_1,\ldots ,a_{d-1},b\in B$ nonzero homogeneous elements of the same degree \cite[Theorem 9.27]{Milne}.  By multiplying $a_1,\ldots ,a_{d-1},b$ by a common homogeneous factor if necessary, we may assume that they have the same degree $n$ with $n$ and $p$ coprime.  It follows that if we let $C$ denote the graded subalgebra of $B$ generated by elements of degree $n$, then since $F$ is infinite, by taking a basis for the elements of degree $n$ as a generating set for $C$ we see by Noether normalization \cite[Ch. V, \S4, Theorem 8]{ZSvol1}
 that there exist $x_1,\ldots ,x_n\in B_n$ such that $C$ is integral over $F[x_1,\ldots ,x_n]$.  Moreover, since $F$ is algebraically closed, this version of Noether normalization shows we may take $\{x_1,\ldots ,x_n\}$ to be a separating transcendence base for ${\rm Frac}(C)$ over $F$.  Then by construction ${\rm Frac}(C)\supseteq F(a_1/b,\ldots ,a_{d-1}/b)(t^n)$ and so ${\rm Frac}(B)=K(t)$ is separable over ${\rm Frac}(C)$ and thus also over $F(x_1,\ldots ,x_n)$.  
 
 If we invert the nonzero homogeneous elements of $F[x_1,\ldots ,x_n]$ we obtain a PID such that $F(a_1/b,\ldots ,a_{d-1}/b)[u^{\pm 1}]$ with $u$ homogeneous of degree $n$ if a finite free module over this localization.  Hence ${\rm Frac}_{{\rm gr}}(B)=K[t^{\pm 1}]$ is a finite free module over ${\rm Frac}_{\rm gr}(F[x_1,\ldots ,x_n])$.  Thus by a graded version of Grothendieck's generic freeness lemma, there exists a nonzero homogeneous element $f\in F[x_1,\ldots ,x_n]$ such that 
$B_f$ is a finitely generated free $F[x_1,\ldots ,x_n]_f$-module.  

In particular we have
$$B_f = F[x_1,\ldots ,x_n]_f h_1\oplus \cdots \oplus F[x_1,\ldots ,x_n]_f h_s$$ for some $h_1,\ldots ,h_s\in B$ and we can write
$h_i h_j = \sum b_{i,j,\ell} h_{\ell}$ with $b_{i,j,\ell}\in F[x_1,\ldots ,x_n]_f$ uniquely determined.

Now since $f\in F[x_1,\ldots ,x_n]$ is nonzero and homogeneous, it is nonzero on a Zariski open subset $\Omega$ of $F^n$.  

Consider $\overline{F(x_1,\ldots ,x_n)}[t_1,\ldots ,t_s]$.  
Then $$A:={\rm Frac}(B_f)\otimes_{F(x_1,\ldots,x_n)} 
\overline{F(x_1,\ldots,x_n)}$$ is a finite free module over $
\overline{F(x_1,\ldots,x_n)}$ with basis $$h_1\otimes 1,\ldots , 
h_s\otimes 1$$ and it is reduced as ${\rm Frac}(B)$ is a 
separable extension of $F(x_1,\ldots,x_n)$.  

It follows that the ideal generated by
$$\sum_{i,j} b_{i,j,\ell} t_i t_j$$ in $\overline{F(x_1,\ldots,x_n)}[t_1,\ldots ,t_s]$ contains a power of $(t_1,\ldots,t_s)$, because if it did not, there would be a nonzero solution $(\alpha_1,\ldots,\alpha_s)\in \overline{F(x_1,\ldots,x_n)}^s$ to the system of equations
$$\sum_{i,j} b_{i,j,\ell} \alpha_i \alpha_j=0$$ and this would then say $$\left(\sum h_i\otimes \alpha_i \right)^2 = 0$$ in $A$.  But since $A$ is reduced and the $h_i$'s form a basis for $A$ over 
$\overline{F(x_1,\ldots,x_n)}$ we then must have $\alpha_1=\cdots =\alpha_s=0$, a contradiction.  It follows that there is some $N$ such that for $i=1,\ldots ,s$ we have
$$t_i^N  = \sum_{\ell} q_{\ell}(t_1,\ldots ,t_s)\left(\sum_{i,j} 
b_{i,j,\ell} t_i t_j\right).$$  Then the coefficients of the 
$q_{\ell}$ lie in a finite extension $L$ of $F(x_1,\ldots,x_n)$.  
We may pick a basis $1=\epsilon_0,\ldots, \epsilon_m$ for 
$L$ as a $F(x_1,\ldots,x_n)$-vector space and we may write 
each $q_{\ell}$ as a sum $\sum_{i=0}^m q_{\ell,i}\epsilon_i$ 
where each $q_{\ell,i}$ is an element of $F(x_1,\ldots,x_n)[t_1,\ldots, t_s]$.  
Since each $b_{i,j,\ell}\in F[x_1,\ldots,x_n]$, we see if apply 
the $F(x_1,\ldots,x_n)$-linear map $T: L\to F(x_1,\ldots,x_n)
$ which sends $\epsilon_i$ to $\delta_{i,0}$ then after 
clearing denominators we get a relation 
$$c(x_1,\ldots ,x_n)t_k^N \in \sum_{\ell} F[x_1,\ldots ,x_n]
\left(\sum_{i,j} b_{i,j,\ell} t_i t_j\right),$$ with 
$c(x_1,\ldots ,x_n)\neq 0$, which holds for all 
$k=1,\ldots ,s$.  

Then if we choose $(\lambda_1,\ldots ,\lambda_n)\in F^n\setminus \{(0,\ldots ,0)\}$ so that $c(x_1,\ldots ,x_n), f\not\in (\lambda_1 x_1+\cdots+ \lambda_n x_n)F[x_1,\ldots ,x_n]$ then we see that if $a_1,\ldots ,a_s\in F[x_1,\ldots,x_n]$ are such that $\left(\sum a_i h_i\right)^2 \in (\lambda_1 x_1+\cdots+ \lambda_n x_n)B_f$ then there is some $M$ such that $$f^M \sum a_i a_j b_{i,j,\ell} \in (\lambda_1 x_1+\cdots+ \lambda_n x_n)F[x_1,\ldots ,x_n]$$ for all $\ell$ and so 
$c(x_1,\ldots ,x_n) f^M a_e^N \in (\lambda_1 x_1+\cdots+ \lambda_n x_n)F[x_1,\ldots ,x_n]$ and so $a_e\in (\lambda_1 x_1+\cdots+ \lambda_n x_n)$ for all $e$ by our choice of $(\lambda_1,\ldots ,\lambda_n)\in F^n$  

Thus $B/ (\lambda_1 x_1+\cdots+ \lambda_n x_n)$ is reduced for all $(\lambda_1,\ldots ,\lambda_n)\in F^n\setminus \{(0,\ldots ,0)\}$ outside of a Zariski closed subset of $F^n$ and by making a linear change of variables, we may assume that $B/x_1B$ is reduced.
\end{proof}

For the remainder of this section we make the following assumptions and adopt the following notation.
\begin{enumerate}
\item We let $F$ be an algebraically closed field of positive characteristic $p>0$. 
\item We let $R$ be a finitely generated $F$-algebra of Gelfand-Kirillov dimension two that has a filtration 
$$F=V_0\subseteq V_1\subseteq V_2\subseteq \cdots $$ with $\bigcup V_i = R$ such that the associated graded ring 
$$B:=F\oplus \left(\bigoplus_{i\ge 1} V_i/V_{i-1}\right)$$ is a commutative domain such that $B_n$ is nonzero for all $n$ large.  
\item For each nonzero $a\in R$ there is a unique smallest $n$ for which $a\in V_n$ and we call this the \emph{degree} of $a$ and write $d(a)$ for this quantity.  
\item For a nonzero element $a$ of degree $n$ we let $\overline{a}$ denote the image of $a$ in $V_n/V_{n-1}$.  
\item Since $R$ is noetherian domain, it has a division algebra of quotients by Goldie's theorem, and we let $D={\rm Frac}(R)$.  
\item We let $x$ be an element of $R$ such that $B/\overline{x}B$ is reduced, and we let $d$ denote the degree of $x$.  Such an $x$ exists by Proposition \ref{prop:reduced}.
\item We let $K\subseteq D$ denote the fraction field of $F[x]\subseteq R$ and we let $E$ denote the centralizer of $x$ in $D$.  
\item Inside of $D$ we can form the left $K$-vector spaces $KV_n$ formed by taking the left $K$-span of elements of $V_n$, and we let $\Omega_n\subseteq KR=\bigcup_i KV_i$ denote the set of elements $a$ such that $[a,x]\in KV_n$. 
\end{enumerate}
We note that assumption (2) is in general not implicit when we consider filtered deformations.  Nevertheless we shall show later on that it is of no loss of generality to make this assumption.

We begin with a lemma that shows that when we work with $K$-vector spaces, the dimension of $KV_n$ grows linearly with $n$.
\begin{lemma} 
\label{lem:O1}
We have ${\rm dim}_K(KV_n/KV_{n-1}) = O(1)$.  
\end{lemma}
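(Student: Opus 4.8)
The plan is to pass from the left $K$-vector spaces $KV_n$ to finitely generated modules over the polynomial ring $F[x]$, where the dimension count becomes a rank count, and then to bound the successive differences of these ranks by the Hilbert function of $B/\overline{x}B$.

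First I would record that $x$ is transcendental over $F$: since $\overline{x}\in B$ is a nonzero homogeneous element of positive degree $d$ in the domain $B$, the powers $x^m$ have pairwise distinct degrees $md$, so $\{1,x,x^2,\dots\}$ is $F$-linearly independent and $F[x]\cong F[T]$ is a polynomial ring, in particular a PID. For each $n$ set $M_n:=F[x]V_n\subseteq R$. This is a finitely generated $F[x]$-module (spanned over $F[x]$ by any $F$-basis of $V_n$) and it is torsion-free, being a submodule of the domain $R$; hence it is free of finite rank. Since $KV_n$ is exactly the left $K$-span $K\cdot M_n$ and localisation is exact, one gets $\dim_K KV_n=\operatorname{rank}_{F[x]}M_n$, and because $M_{n-1}\subseteq M_n$ are both free, $\dim_K\big(KV_n/KV_{n-1}\big)=\operatorname{rank}_{F[x]}M_n-\operatorname{rank}_{F[x]}M_{n-1}$, which equals the rank of the $F[x]$-module $M_n/M_{n-1}$.

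The heart of the argument is a generation bound for $M_n/M_{n-1}$. Choosing a homogeneous decomposition $B_n=\overline{x}B_{n-d}\oplus W$, I would lift a basis of $W$ to elements $w_1,\dots,w_s\in V_n$, where $s=\dim_F W=\dim_F(B/\overline{x}B)_n$. The key observation is that the complementary part $\overline{x}B_{n-d}$ is absorbed by left multiplication by $x$: if $b\in B_{n-d}$ is lifted to $v\in V_{n-d}\subseteq V_{n-1}$, then $xv\in V_n$ has leading form $\overline{x}b$ and already lies in $xV_{n-1}\subseteq M_{n-1}$. Running this over a basis of $B_n$ gives the inclusion $V_n\subseteq V_{n-1}+xV_{n-1}+\sum_{i=1}^s Fw_i$, and multiplying by $F[x]$ yields $M_n=M_{n-1}+\sum_{i=1}^s F[x]w_i$, since $F[x]\,xV_{n-1}\subseteq M_{n-1}$. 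Hence $M_n/M_{n-1}$ is generated over $F[x]$ by $s$ elements, so its rank is at most $s=\dim_F(B/\overline{x}B)_n$, giving $\dim_K(KV_n/KV_{n-1})\le \dim_F(B/\overline{x}B)_n$.

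Finally I would invoke that $B/\overline{x}B$ is a finitely generated connected graded commutative $F$-algebra of Krull dimension one (the dimension drops by exactly one when a nonzero homogeneous element is factored out of the two-dimensional domain $B$, by Krull's principal ideal theorem). Its Hilbert function is therefore bounded, being eventually a quasi-polynomial of degree zero in $n$, so $\dim_F(B/\overline{x}B)_n=O(1)$, and combining with the previous paragraph gives $\dim_K(KV_n/KV_{n-1})=O(1)$. The step I expect to be the main obstacle is the generation bound of the third paragraph: everything hinges on recognising that left multiplication by $x$ absorbs the submodule $\overline{x}B_{n-d}\subseteq B_n$, cutting the number of $F[x]$-module generators of $M_n/M_{n-1}$ down from $\dim_F B_n$, which grows linearly, to $\dim_F(B/\overline{x}B)_n$, which is bounded. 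The supporting facts—that $M_n$ is free of finite rank with $\dim_K KV_n=\operatorname{rank}_{F[x]}M_n$, which needs the left $K$-linear independence of an $F[x]$-basis inside the possibly noncommutative $D$, and that $B/\overline{x}B$ has bounded Hilbert function—are routine. I note that this argument uses only $\overline{x}\neq 0$ and $\dim B=2$, and not the reducedness of $B/\overline{x}B$.
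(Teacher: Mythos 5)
Your proof is correct and rests on exactly the same idea as the paper's: modulo $V_{n-1}$ and $xV_{n-d}$ (which is absorbed into $KV_{n-1}$), the space $V_n$ is spanned by lifts of a basis of $(B/\overline{x}B)_n$, whose dimension is bounded since $B/\overline{x}B$ has Krull dimension one. The only difference is packaging: the paper argues directly that $KV_n=KV_{n-1}+Ka_1+\cdots+Ka_s$ with $s\le\kappa$, whereas you route the same count through ranks of free $F[x]$-modules, which is harmless but not needed.
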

\begin{proof} Since the algebra $B/\overline{x}B$ is a finitely generated commutative $F$-algebra of Krull dimension one, its Hilbert series is rational with all poles at roots of unity and all poles simple. Consequently, the Hilbert series has eventually periodic coefficients.  It follows that there is some $\kappa$ such that for all $n$, $\left(B/\overline{x}B\right)_n$ has dimension at most $\kappa$.
Let $n\ge 1$.  Then there exist $s\le \kappa$ and $a_1,\ldots ,a_s\in V_n$ such that their images in
$(B/\overline{x}B)_n$ span this space as a $F$-vector space.  We claim that $KV_n = KV_{n-1} + Ka_1+\cdots + Ka_s$.  To see this, let $a\in V_n$.  Then there are $\lambda_1,\ldots ,\lambda_s\in F$ such that $a-\sum_{i=1}^s \lambda_i a_i \in V_{n-1} + xV_{n-d}$.  In particular,
$a\in KV_{n-1} + Ka_1+\cdots + Ka_s$.  Since the elements $a\in V_n$ span $KV_n$ as a left $K$-vector space, we obtain that $KV_n/KV_{n-1}$ has dimension $\le \kappa$ for all $n$.  The result follows.
\end{proof}
We need the following lemma, which will give the estimates we will need in the proof of Theorem \ref{thm:main}.

\begin{lemma} Let $r, d$, and $N$ be positive integers. Then there exist positive integers $L=L(d)$ and $M=M(r,d,N)$ with the following properties:
\begin{enumerate}
\item[(a)] for $i=L+1,\ldots ,L+r$ we have $(\lceil M/p^i\rceil -1)p^i + d < p^L M$;
\item[(b)] the integers $\lceil M/p^i\rceil p^i$ for $i=L+1,\ldots ,L+r$ are pairwise distinct and larger than $p^{L+r} N$ and are strictly increasing with $i$.
\end{enumerate}
\label{lem:M}
\end{lemma}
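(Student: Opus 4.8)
The plan is to rewrite the quantity $\lceil M/p^i\rceil p^i$ as the least multiple of $p^i$ that is at least $M$, namely $M + r_i$ where $r_i := (-M)\bmod p^i \in \{0,1,\ldots,p^i-1\}$ is the gap from $M$ up to the next multiple of $p^i$. In this notation condition (a) becomes $M + r_i - p^i + d < p^L M$, while condition (b) asks that $M + r_i$ be strictly increasing in $i$ over $i = L+1,\ldots,L+r$ and exceed $p^{L+r}N$. I would treat the two conditions almost independently, using the choice of $L$ to settle (a), the residue class of $M$ modulo $p^{L+r}$ to settle the monotonicity in (b), and the freedom to enlarge $M$ within its residue class to meet the size requirement.

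For (a): since $0 \le r_i < p^i$, the left-hand side $M + r_i - p^i + d$ is strictly less than $M + d$. Hence it suffices to choose $L$ so that $M + d \le p^L M$ holds for every $M \ge 1$, and because $M \ge 1$ this is guaranteed as soon as $p^L \ge d+1$. I therefore set $L = L(d) := \lceil \log_p(d+1)\rceil$; this depends only on $d$ and works uniformly in $M$, exactly as the statement requires.

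For (b): the key observation is that the gaps $r_i$ are encoded in the base-$p$ digits of $-M$. Writing $(-M)\bmod p^{L+r} = \sum_{j=0}^{L+r-1}\gamma_j p^j$ with $\gamma_j \in \{0,\ldots,p-1\}$, one finds $r_i = \sum_{j<i}\gamma_j p^j$ for $i \le L+r$, so that $r_{i+1} - r_i = \gamma_i p^i$. Thus $M + r_i$ is strictly increasing over the required range precisely when $\gamma_i \ge 1$ for $i = L+1,\ldots,L+r-1$. I would force this by prescribing the residue of $M$ modulo $p^{L+r}$, for example requiring $M \equiv -(p^{L+1} + p^{L+2} + \cdots + p^{L+r-1}) \pmod{p^{L+r}}$, which makes $\gamma_{L+1} = \cdots = \gamma_{L+r-1} = 1$ and all lower digits zero, so that $r_{L+1}=0$ and the sequence grows by one positive power of $p$ at each step.

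Finally, since this residue condition only constrains $M$ modulo $p^{L+r}$, and since adding any multiple of $p^{L+r}$ to $M$ leaves $r_i$ unchanged for all $i \le L+r$, I am free to take $M = M(r,d,N)$ as large as I like within its residue class; choosing $M > p^{L+r}N$ then gives $M + r_i \ge M > p^{L+r}N$ for every $i$, and pairwise distinctness follows from strict monotonicity. The step I expect to be the crux is the strict monotonicity in (b): it fails for a generic $M$, and the base-$p$ digit reformulation is precisely what isolates the obstruction and shows it can be removed by a suitable residue choice. The clean separation of roles — $L$ controlling (a), the residue $M \bmod p^{L+r}$ controlling monotonicity, and the size of $M$ controlling the lower bound — is what makes the dependencies $L = L(d)$ and $M = M(r,d,N)$ come out as claimed.
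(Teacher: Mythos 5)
Your proof is correct and takes essentially the same route as the paper: both fix $L$ with $p^L>d$ and force monotonicity of $\lceil M/p^i\rceil p^i$ by prescribing the base-$p$ digits of $-M$ (the paper's explicit choice $M=p^{L+r+1}M'-p^{L+1}-\cdots-p^{L+r}$ is exactly your residue condition written out, shifted by one power of $p$). One small caveat worth noting: the paper's computation yields the sharper inequality $(\lceil M/p^i\rceil-1)p^i+d<M$, which is what Proposition \ref{prop:infinity} actually invokes later; your residue choice happens to satisfy this as well (since $r_{L+1}=0$ and $r_i\le p^i-p^{L+1}<p^i-d$ for $i\ge L+2$), but your written argument for (a) only verifies the weaker bound $<p^LM$ that appears in the literal statement.
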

\begin{proof} Pick $L$ such that $p^L>d$ and pick $M' > N+1$ and let 
$$M=p^{L+r+1} M' - p^{L+1}- p^{L+2}-\cdots - p^{L+r}.$$
Then for $i\in \{L+1,\ldots ,L+r\}$, 
$$\lceil M/p^i \rceil = p^{L+r+1-i} M' - 1 - p - \cdots - p^{L+r-i}$$
and so 
\begin{align*}
(\lceil M/p^i\rceil -1)p^i + d &=p^{L+r+1} M' - 2p^i - p^{i+1} - \cdots - p^{L+r}+d\\
&= M - p^i + \left(\sum_{j=L+1}^{i-1} p^j \right) + d < M, \end{align*}
where the last step follows from the fact that $d<p^{L}$ and the fact that 
$$p^{L+1} +\cdots + p^{i-1} < p^i-p^L.$$
This establishes (a).  The above also shows that 
$\lceil M/p^i\rceil p^i= p^{L+r+1}M' - p^i - \cdots - p^{L+r}$, and so we see the integers are pairwise distinct and increasing with $i$.  The fact that these numbers are all larger than $p^{L+r} N$ follows from our choice of $M'$ and the fact that $p^{L+r+1}M' - p^i - \cdots - p^{L+r}> p^{L+r+1} (M'-1)$.
\end{proof}
\begin{prop}
We have $\limsup_n \left({\rm dim}_K(\Omega_n) - {\rm dim}_K(KV_n) \right) =\infty$.
\label{prop:infinity}
\end{prop}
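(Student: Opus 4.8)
The plan is to exploit the restricted Lie algebra structure of $R$. For any $w\in R$ the inner derivation ${\rm ad}_w=[w,-]$ satisfies $({\rm ad}_w)^{p}={\rm ad}_{w^p}$: writing ${\rm ad}_w=L_w-R_w$ as left minus right multiplication, the operators $L_w,R_w$ commute, so in characteristic $p$ we get $({\rm ad}_w)^p=L_w^p-R_w^p={\rm ad}_{w^p}$, and iterating gives $({\rm ad}_w)^{p^i}={\rm ad}_{w^{p^i}}$. Since ${\rm ad}_x$ is left $K$-linear and carries $KV_m$ into $KV_{m+d-1}$, we have $\Omega_n={\rm ad}_x^{-1}(KV_n)$ and $KV_{n-d+1}\subseteq\Omega_n$. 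For $0\ne a\in KR$ write $\delta(a)=\min\{m:a\in KV_m\}$ for its $K$-degree. I would reduce the proposition to the following claim: for every $r$ there is an $n$, which may be taken arbitrarily large, together with $z_1,\dots,z_r\in\Omega_n$ whose $K$-degrees $\delta(z_i)$ are distinct and strictly larger than $n$.

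To build the $z_i$ I would invoke Lemma \ref{lem:M} with the given $d$ and with $N,r$ as parameters, producing $L$ and $M$, and set $e_i:=\lceil M/p^i\rceil$ and $D_i:=e_ip^i$ for $i=L+1,\dots,L+r$. Each $e_i$ is large once $M$ is large, and $(B/\overline{x}B)_{e_i}\ne 0$ for all large $e_i$: indeed $\overline{x}$ is a nonzerodivisor on the domain $B$, so ${\rm dim}_F(B/\overline{x}B)_m={\rm dim}_F B_m-{\rm dim}_F B_{m-d}$, which is positive for all large $m$ because $B$ has Krull dimension two and hence ${\rm dim}_F B_m$ grows linearly. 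I therefore choose $w_i\in V_{e_i}$ with $\deg(w_i)=e_i$ and $\overline{x}\nmid\overline{w_i}$ in $B$, and put $z_i:=w_i^{p^i}$. Two facts drive the argument. First, $\overline{z_i}=\overline{w_i}^{\,p^i}$ is not divisible by $\overline{x}$, since $B/\overline{x}B$ is reduced, so $\overline{x}B$ is radical and $\overline{x}\mid\overline{w_i}^{\,p^i}$ would force $\overline{x}\mid\overline{w_i}$; hence $\delta(z_i)=\deg(z_i)=D_i$, and these are distinct and increasing by Lemma \ref{lem:M}(b). Second, by the restricted Lie identity $[z_i,x]={\rm ad}_{w_i^{p^i}}(x)=({\rm ad}_{w_i})^{p^i}(x)$, and since each application of ${\rm ad}_{w_i}$ raises filtration degree by at most $\deg(w_i)-1=e_i-1$, we obtain $[z_i,x]\in V_{d+p^i(e_i-1)}=V_{D_i-p^i+d}$, which is exactly the quantity $(\lceil M/p^i\rceil-1)p^i+d$ controlled in Lemma \ref{lem:M}(a).

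Now set $n:=\max_i\big(D_i-p^i+d\big)$. Then $[z_i,x]\in V_n\subseteq KV_n$ for every $i$, so $z_1,\dots,z_r\in\Omega_n$, while a direct computation with the explicit $D_i$ (using $p^L>d$, so $p^i>d$) gives $n<\min_iD_i$, whence $\delta(z_i)=D_i>n$ for all $i$. Because the $D_i$ are distinct and all exceed $n-d+1$, the sum $KV_{n-d+1}+\sum_iKz_i$ inside $\Omega_n$ is direct, so ${\rm dim}_K\Omega_n\ge {\rm dim}_KKV_{n-d+1}+r$. By Lemma \ref{lem:O1} the gap ${\rm dim}_KKV_n-{\rm dim}_KKV_{n-d+1}$ is at most a constant $(d-1)\kappa$ independent of $n$, so ${\rm dim}_K\Omega_n-{\rm dim}_KKV_n\ge r-(d-1)\kappa$. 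Finally Lemma \ref{lem:M}(b) forces $D_i>p^{L+r}N$, so $n\to\infty$ as $N\to\infty$; hence this lower bound is attained for arbitrarily large $n$, and letting $r\to\infty$ gives $\limsup_n\big({\rm dim}_K\Omega_n-{\rm dim}_KKV_n\big)=\infty$.

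The main obstacle is the degree drop in the second step: the content is that although $z_i$ has $K$-degree $D_i$, its commutator with $x$ drops in degree by the full $p^i-d$, which is precisely what makes $z_i$ a genuinely new element of $\Omega_n$ sitting far above $KV_n$. The clean mechanism is the identity $({\rm ad}_{w_i})^{p^i}={\rm ad}_{w_i^{p^i}}$, which trades a single high power for $p^i$ applications of a low-degree operator; the bookkeeping required to realize all of these drops for one common $n$ while keeping every degree that arises below the threshold $p^LM$ is exactly what Lemma \ref{lem:M} packages. The one place where the hypotheses on $x$ are indispensable is the non-divisibility $\overline{x}\nmid\overline{w_i}^{\,p^i}$, which is guaranteed by the reducedness of $B/\overline{x}B$ arranged in Proposition \ref{prop:reduced}; without it the $z_i$ could collapse into lower $K$-degree and the independence would fail.
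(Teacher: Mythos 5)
Your construction is the same as the paper's: take elements $w_i$ of degree $e_i=\lceil M/p^i\rceil$ whose symbols avoid $\overline{x}B$, pass to $z_i=w_i^{p^i}$, use $({\rm ad}_{w_i})^{p^i}={\rm ad}_{w_i^{p^i}}$ together with Lemma \ref{lem:M} to force the commutator $[z_i,x]$ down below the threshold, use reducedness of $B/\overline{x}B$ to keep $\overline{z_i}$ out of $\overline{x}B$, and conclude via Lemma \ref{lem:O1}. The bookkeeping, the role of each hypothesis, and the final estimate all match.

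However, there is a genuine gap at the central step: you assert that $\overline{x}\nmid\overline{z_i}$ \emph{hence} $\delta(z_i)=\deg(z_i)=D_i$, i.e.\ $z_i\notin KV_{D_i-1}$, with no argument. This implication is true but is not immediate, and it is exactly where the work lies. If $z_i\in KV_{D_i-1}$, clearing denominators gives $g(x)z_i=\sum_j f_j(x)v_j$ with $v_j\in V_{D_i-1}$; passing to the associated graded ring only yields $\overline{x}\mid\overline{z_i}$ if the top-degree terms $\overline{x}^{\deg f_j}\overline{v_j}$ on the right do not cancel among themselves. When they do cancel, the leading symbol of the right-hand side is no longer of the form $\sum\overline{x}^{k}\overline{v}$ with $v\in V_{D_i-1}$, and the divisibility conclusion does not follow from a one-line comparison of leading terms. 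One must show that every element of $F[x]V_{D_i-1}$ admits a cancellation-free representation (a straightening argument using that $B$ is a domain, replacing $w_{k_0}$ by $w_{k_0}-x\tilde c$ and $w_{k_0+1}$ by $w_{k_0+1}+\tilde c$ whenever the top-degree parts cancel, and inducting), or else argue globally as the paper does: it fixes a $K$-basis $u_1,\dots,u_e$ of $KV_{M-d}$ whose degree sequence is lexicographically minimal, and uses that minimality to rule out precisely this cancellation in its two-case analysis of a putative dependence relation. Your reduction of the global $K$-independence to the single statement $\delta(z_i)=D_i$ with distinct values is a legitimate simplification of the logic, but that single statement still carries the full technical content of the proposition and must be proved.
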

\begin{proof}
 Let $r$ be an arbitrary positive integer.  We apply Lemma \ref{lem:M} with this $r$, $d=d(x)$ (the degree of $x$), and $N$ such that $B_n/xB_{n-d}$ has positive dimension as an $F$-vector space for all $n\ge N$.  
 
Then consider the set $U_M$ of elements $a\in R$ such that $[a,x]\in V_M$.  Then $U_M\subseteq \Omega_M$. For $i\in \{L+1,\ldots ,L+r\}$, we let $\ell_i=\lceil M/p^i\rceil$.  Then $\ell_i>N$ by part (b) of Lemma \ref{lem:M}, and so for $i\in \{L+1,\ldots ,L+r\}$ there is an element $a_i$ whose image in $V_{\ell_i}/(xV_{\ell_i-d} + V_{\ell_i-1})$ is nonzero.  Then since $B/\overline{x}B$ is reduced, we see that 
the image of $a_{i}^{p^i}$ is nonzero in $V_{p^i \ell_i}/(xV_{p^i \ell_i-d} + V_{p^i \ell_i-1}).$  Then since $d({\rm ad}_v(w)) < d(v)+d(w)$ for all $v,w\in R$ and since ${\rm ad}_{v^{p^i}} = {\rm ad}_v^{p^i}$, we see by Lemma \ref{lem:M} that
$[a_{i}^{p^i},x]$ has degree at most 
$$p^i(\ell_i-1)+ d < p^i(\lceil M/p^i\rceil -1)+d<M.$$
 
Hence $a_{i}^{p^i}\in U_M$.  We also have by Lemma \ref{lem:M} that $p^{L+1} \ell_{L+1},\ldots , p^{L+r}\ell_{L+r}$ are pairwise distinct.  
It follows that the $a_{L+1}^{p^{L+1}},\ldots ,a_{L+r}^{p^{L+r}}$ are linearly independent mod $xR$ since they have distinct degrees and the image of $a_i^{p^i}$ has nonzero image in the degree $\ell_i p^i$ part of $B/\overline{x}B$ since this ring is reduced.
  
Thus $V_{M-d+1} + \sum_{i=L+1}^{L+r} F a_i^{p^i} \subseteq U_M$.  We now let $u_1,\ldots ,u_e$ be a $K$-basis for $KV_n$ with $d(u_1)\ge \cdots \ge d(u_e)$ and with the additional property that if 
$u_1',\ldots ,u_e'$ is another $K$-basis for $KV_n$ with 
$d(u_1')\ge \cdots \ge d(u_e')$ then either $d(u_i)=d(u_i')$ for $i=1,\ldots, e$ or there is some $j$ such that $d(u_j') > d(u_j)$ and $d(u_i)=d(u_i')$ for $i<j$. 
 
We now claim that $$\{u_1,\ldots, u_e\}\cup \{a_i^{p^i}\colon i=L+1,\ldots ,L+r\}$$ is a left $K$-linearly independent subset of $\Omega_M$.  The fact that these elements are in $\Omega_M$ has been established, so it suffices to prove independence.  

To see independence, suppose that we have some nontrivial dependence
$$\sum_{i=1}^e \lambda_i u_i + \sum_{j=L+1}^{L+r} \gamma_j a_{j}^{p^j} =0$$ with $\gamma_{i},\lambda_j\in K$.  Then after clearing denominators, we may assume that the $\lambda_i$ and $\gamma_{j}$ are in $F[x]$.  Then if we consider the image in the associated graded ring, we see that we have a non-trivial homogenous relation of the form
$$\sum_{i=1}^e \alpha_i \overline{x}^{m_i} \overline{u_i} + \sum_{i=L+1}^{L+r} \beta_{i} \overline{x}^{n_{i}} \overline{a_{i}}^{p^i} = 0$$ with the $\alpha_i,\beta_{i}\in F$ and $m_i,n_{i}$ nonnegative integers such that there is some $N_0$ such that 
$N_0 = m_{\ell} d + d(u_{\ell}) = n_{i}d +p^i d(a_{i})$ for all $\ell$ and all $i$; moreover, since the associated graded ring is an integral domain we may assume that the minimum of the $m_i$ and $n_{i}$ is zero.  

There are now two cases to consider.  The first case is when some $\beta_i\neq 0$.  Then we let $i_0$ be the largest index $i$ for which some $\beta_{i}\neq 0$.  Then by Lemma \ref{lem:M}, $p^{i_0} d(a_{i_0}) > p^i d(a_{i})$ for $i<i_0$ and since $p^{i_0} d(a_{i_0})> n \ge d(u_i)$ for all $i$, we then see that $n_{i_0}=0$ and $m_i>0$ for all $i$ and $n_{i}>0$ for $i<i_0$ and so our homogeneous relation implies
$$\overline{a_{i_0}}^{p^{i_0}} \in \overline{x}B.$$
Since the ideal $\overline{x}B$ is reduced, this then implies $\overline{a_{i_0}}$
is in $\overline{x}B$, which contradicts our manner of choosing this element.  

Thus we may assume that $\beta_j=0$ for all $j$ and so our non-trivial relation is of the form
$$\sum_{i=1}^e \alpha_i \overline{x}^{m_i} \overline{u_i}=0.$$
We now let $q$ denote the smallest index $i$ for which $\alpha_i\neq 0$. Then since $d(u_1)\ge \cdots \ge d(u_e)$, we see that $m_q=0$ and after scaling we may assume that $\alpha_q=1$ and so our homogeneous relation is of the form
$$\overline{u_q} + \sum_{i=q+1}^e \alpha_i \overline{x}^{m_i} \overline{u_i} = 0.$$ 

In particular, if we let $u_i' = u_i$ for $i\neq q$ and let $$u_q' = u_q + \sum_{i=q+1}^e c_i x^{m_i} u_i,$$ then $d(u_q')<d(u_q)$ and 
$u_1',\ldots, u_e'$ are a $K$-basis for $KV_n$.  After reindexing to ensure that the degrees of the $u_i'$ are weakly decreasing with $i$, we then see that the smallest index $j$ for which $d(u_j')\neq d(u_j)$ necessarily satisfies $d(u_j') < d(u_j)$, which contradicts how the basis $u_1,\ldots ,u_e$ was chosen.  Thus
$\{u_1,\ldots, u_e\}\cup \{a_i^{p^i}\colon i=L+1,\ldots ,L+r\}$ is $K$-linearly independent.  So we see ${\rm dim}_K(\Omega_M) \ge {\rm dim}(KV_{M-d}) + r$.  By Lemma \ref{lem:O1} we have 
${\rm dim}_K(KV_n) - {\rm dim}_K(V_{n-d}) = O(1)$ and so 
${\rm dim}_K(\Omega_M) - {\rm dim}_K(KV_M) \ge r - O(1)$, where the $O(1)$ constant depends on $d$ but not on $r$ or $M$.  Since $r$ is arbitrary, we then see that the supremum of ${\rm dim}_K(\Omega_n) - {\rm dim}_K(KV_n)$ over $n$ is infinite. The result follows.
\end{proof}
\begin{corollary} We have ${\rm dim}_K(E)=\infty$.
\label{cor:E}
\end{corollary}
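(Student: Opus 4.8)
The plan is to deduce the corollary directly from Proposition~\ref{prop:infinity} by a rank--nullity argument applied to the adjoint operator. The first thing I would record is that ${\rm ad}_x\colon KR\to KR$, $a\mapsto [a,x]$, is \emph{left} $K$-linear. This is the one place where the choice of $K={\rm Frac}(F[x])$ matters: since every $\lambda\in K$ commutes with $x$, we have $[\lambda a,x]=\lambda ax-x\lambda a=\lambda ax-\lambda x a=\lambda[a,x]$. For the same reason each $KV_n$ is a left $K$-subspace of $D$, and hence $\Omega_n={\rm ad}_x^{-1}(KV_n)$ is a left $K$-subspace of $KR=\bigcup_i KV_i$.

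Next I would set $Z=\{a\in KR\colon [a,x]=0\}$, the centralizer of $x$ inside $KR$, which is a left $K$-subspace of the centralizer $E$. Because $0\in KV_n$, we have $Z\subseteq \Omega_n$ for every $n$, and ${\rm ad}_x$ restricts to a $K$-linear map $\Omega_n\to KV_n$ whose kernel is precisely $Z$. This gives a short exact sequence of left $K$-vector spaces $0\to Z\to \Omega_n\xrightarrow{{\rm ad}_x}{\rm ad}_x(\Omega_n)\to 0$ with ${\rm ad}_x(\Omega_n)\subseteq KV_n$, whence ${\rm dim}_K(Z)\ge {\rm dim}_K(\Omega_n)-{\rm dim}_K(KV_n)$ for every $n$. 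Taking the supremum over $n$ and invoking Proposition~\ref{prop:infinity} forces ${\rm dim}_K(Z)=\infty$, and since $Z\subseteq E$ we conclude ${\rm dim}_K(E)=\infty$.

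I do not anticipate a real obstacle here, as the substance has been front-loaded into Proposition~\ref{prop:infinity}; the corollary is essentially a one-line consequence. The only points needing a moment's care are the left $K$-linearity of ${\rm ad}_x$ (which relies on $x$ commuting with $K$, and would fail for a general filtration variable) and the bookkeeping in the event that some $\Omega_n$ is itself infinite-dimensional over $K$. In that degenerate case the displayed inequality still holds in $\mathbb{N}\cup\{\infty\}$ and in fact yields ${\rm dim}_K(Z)=\infty$ immediately, since the image ${\rm ad}_x(\Omega_n)$ is trapped inside the finite-dimensional space $KV_n$ (finite-dimensional by Lemma~\ref{lem:O1}).
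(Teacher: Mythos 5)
Your argument is correct and is essentially the paper's own proof: both apply rank–nullity to the left $K$-linear map ${\rm ad}_x\colon \Omega_n\to KV_n$, identify the kernel with (a subspace of) the centralizer $E$, and let Proposition~\ref{prop:infinity} force that kernel to be infinite-dimensional. Your extra checks (left $K$-linearity of ${\rm ad}_x$ and the degenerate case) are sound but not a different route.
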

\begin{proof} We have a $K$-linear map
$$\Omega_n \to KV_n$$ given by $a\mapsto [a,x]$.  Then the kernel of this map is $E\cap \Omega_n$ and so the rank-nullity theorem gives the inequality
$${\rm dim}_K(\Omega_n) \le {\rm dim}_K(KV_n) + {\rm dim}_K(E).$$  In particular, if $E$ is finite-dimensional as a $K$-vector space then ${\rm dim}_K(\Omega_n) - {\rm dim}_K(KV_n)$ is uniformly bounded, which contradicts the conclusion of Proposition \ref{prop:infinity}.  It follows that $E$ is infinite dimensional as a $K$-vector space.
\end{proof}

We are now ready to prove our main result.

\begin{proof}[Proof of Theorem \ref{thm:main}]
We note that we may make the assumption that the associated graded ring of $R$ is connected, because if it is not, $V_0$, the degree zero piece of the filtration, is a commutative subalgebra of $R$ that properly contains $F$ and hence it has an element $y$ that is transcendental over $F$.  Then ${\rm ad}_y$ is a locally nilpotent derivation of $R$ and since $R$ is finitely generated $y^{p^n}$ is central for some $n$.  But then $R$ has Gelfand-Kirillov dimension two and has a centre of Gelfand-Kirillov dimension at least one and hence $R$ satisfies a polynomial identity by a result of Smith and Zhang \cite{SZ}.

Thus we may assume that the associated graded ring of $R$ is connected and so we now adopt the assumptions and notation from items (1)--(8) earlier in this section.  Then by Corollary \ref{cor:E} we have that if $E$ is the centralizer of $x$ in $D={\rm Frac}(R)$ then $E$ is an infinite-dimensional $K$-vector space.

We claim there is some $a\in E$ that is not algebraic over $K$.  To see this, suppose towards a contradiction that this is not the case.  Then every $a\in E$ has the property that $K(a)$ is an algebraic extension of $K$. 
Then since $F$ is algebraically closed and $B$ has a separating transcendence base over $F$, we have that $B\otimes_F L$ is a noetherian integral domain for every field extension $L$ of $F$ and so $R\otimes_F L$ is a noetherian domain for every field extension $L$ of $F$.  In particular if we take $L$ to be a maximal subfield of $E$ then $R\otimes_F L$ is noetherian and hence localization gives that $D\otimes_F L$ is noetherian; finally since $D\otimes_F L$ is a free $L\otimes_F L$ module, faithful flatness shows that $L\otimes_F L$ is noetherian and so a result of V\'amos \cite{Vam} gives that $L$ is finitely generated as an extension of $F$ and since $L$ is algebraic over $K$ we also have that $[L:K]<\infty$ and so $[L:Z(E)]<\infty$.   Since $L$ is a maximal subfield of $E$, it is equal to its own centralizer in $E$ and so the fact that ${\rm dim}_K(L)<\infty$ gives that ${\rm dim}_L(E)<\infty$ \cite[Theorem 15.4]{Lam}.  But this then gives that $E$ is finite-dimensional over $K$, a contradiction.

It follows that there is some element $a\in E$ such that $K(a)$ has transcendence degree one over $K$ then $F(x,a)$ is a field extension of Gelfand-Kirillov dimension two and so \cite[Theorem 1.4]{BellDivision} gives that $D$ is finite-dimensional over its centre and hence $R$ satisfies a polynomial identity.
\end{proof}
\section*{Acknowledgments}
We thank Pavel Etingof for helpful discussions.

\end{document}